\newtheorem{theorem}{Theorem}[section]
\newtheorem{lemma}[theorem]{Lemma}
\newtheorem{proposition}[theorem]{Proposition}
\theoremstyle{definition}
\theoremstyle{remark}
\numberwithin{equation}{section}
\begin{document}

\title[A characterization of inner product spaces]{A characterization of inner product spaces related to the $p$-angular distance}

\author[F. Dadipour, M.S. Moslehian]{F. Dadipour and M. S. Moslehian}

\address{Department of Pure Mathematics, Center of Excellence in
Analysis on Algebraic Structures (CEAAS), Ferdowsi University of
Mashhad, P. O. Box 1159, Mashhad 91775, Iran.}
\email{dadipoor@yahoo.com}
\email{moslehian@ferdowsi.um.ac.ir and
moslehian@ams.org}
\urladdr{\url{http://profsite.um.ac.ir/~moslehian/}}

\subjclass[2010]{Primary 46C15; Secondary 46B20, 46C05.}

\keywords{inner product space; characterization of inner product spaces; $p$-angular distance; Dunkl--Williams inequality.}

\begin{abstract}
In this paper we present a new characterization of inner product spaces related to the $p-$angular distance.
We also generalize some results due to Dunkl, Williams, Kirk, Smiley and Al-Rashed by using the notion of $p-$angular
distance.
\end{abstract} \maketitle

%------------------------------------------------------------------------------%

\section{Introduction}

In 1935, Fr\'echet \cite{FRE} gave a geometric characterization of inner product spaces. In the same year, Jordan and von Neumann
\cite{J-V} characterized inner product spaces as normed linear spaces satisfying the parallelogram law. In 1943, Ficken showed that
a normed linear space is an inner product space if and only if a reflection about a line in any two-dimensional subspace is an isometric mapping.
In 1947, Lorch presented several characteriztion of inner product spaces. Since then the problem of finding necessary and sufficient conditions for
a normed space to be an inner product space has been investigated by many mathematicians by considering some types of orthogonality or
some geometric aspects of underlying spaces. Some known characterizations of inner product spaces and their generalizations can be found in
\cite{A-T, Amir, B-C, M-R} and references therein.

There are interesting norm inequalities connected with characterizations of inner product spaces. One of celebrated characterizations of
inner product spaces has been based on the so-called Dunkl--Williams inequality.
In 1936, Clarkson \cite{Clark} introduced the concept of angular distance between nonzero elements $x$ and $y$ in a
normed space $(\mathscr{X},\|.\|)$ as $\alpha[x,y]=\left\|\frac{x}{\|x\|}-\frac{y}{\|y\|}\right\|$. One can observe some analogies between
 this notion and the concept of angle $A(x,y)$ between two nonzero vectors $x,y$ in a normed linear $(\mathscr{X},\|\cdot\|)$ defined by
$$ A(x,y)=\cos^{-1}\left[\tfrac12 \left(2-\left\|\frac x{\|x\|}-\frac y{\|y\|} \right\|^2\right)\right].$$ In \cite{F-D-A}, Freese, Diminnie
and Andalafte obtained a characterization of real inner product spaces in terms of their above notion of angle.
In 1964, Dunkl and Williams \cite{D-W} obtained a useful upper bound for the angular distance. They showed that
$$\alpha[x,y]\leq\frac{4\|x-y\|}{\|x\|+\|y\|}\ .$$

In the same paper, the authors proved that the constant $4$ can be replaced by $2$ if $\mathscr{X}$ is an inner product space.
Kirk and Smiley \cite{K-S} showed that
$$\alpha[x,y]\leq\frac{2\|x-y\|}{\|x\|+\|y\|}$$
characterizes inner product spaces.\\
In 1990, Al-Rashed \cite{A-R} generalized the work of Kirk and Smiley. He proved that in a real normed space $(\mathscr{X},\|.\|)$ the
following inequality
\begin{eqnarray*}
\alpha[x,y]\leq2^{\frac{1}{q}}\frac{\|x-y\|}{(\|x\|^{q}+\|y\|^{q})^{\frac{1}{q}}}\ \ \ (q\in(0,1])
\end{eqnarray*}
holds if and only if the given norm is induced by an inner product.\\
In \cite{Malig}, Maligranda considered the $p-$angular distance ($p\in \mathbb{R}$) as a generalization of the concept of
angular distance to which it reduces when $p=0$ as follows:
$$\alpha_{p}[x,y]:=\left\|\frac{x}{\|x\|^{1-p}}-\frac{y}{\|y\|^{1-p}}\right\|$$
Maligranda in the same paper and Dragomir in \cite{Drag} obtained some upper and lower bounds for the $p-$angular distance in normed spaces.\\

In this paper we present a new characterization of inner product spaces related to the $p-$angular distance.
We also generalize some results due to Dunkl, Williams, Kirk, Smiley and Al-Rashed by using the notion of $p-$angular
distance instead of that of angular distance.

%------------------------------------------------------------------------------%

\section{Main results}

We start this section with a norm inequality due to Maligranda \cite{Malig} that provides a suitable upper bound for
the $p-$angular distance.

\begin{theorem} \label{t1} \cite{Malig}
Let $(\mathscr{X},\|.\|)$ be a normed space and $p\in[0,1]$.Then
$$\alpha_{p}[x,y]\leq (2-p)\frac{\|x-y\|}{(\max\{\|x\|,\|y\|\})^{1-p}} \ \ (x,y\neq0)$$
\end{theorem}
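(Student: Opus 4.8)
The plan is to reduce the inequality to a statement about the real-valued function $t \mapsto t^{1-p}$ and exploit a symmetry in $x,y$ together with the triangle inequality. Without loss of generality assume $\|x\| \ge \|y\| > 0$, so that $\max\{\|x\|,\|y\|\} = \|x\|$ and the claimed bound reads $\alpha_p[x,y] \le (2-p)\|x-y\|/\|x\|^{1-p}$. Write
\[
\frac{x}{\|x\|^{1-p}} - \frac{y}{\|y\|^{1-p}}
= \frac{x-y}{\|x\|^{1-p}} + y\left(\frac{1}{\|x\|^{1-p}} - \frac{1}{\|y\|^{1-p}}\right),
\]
so that by the triangle inequality
\[
\alpha_p[x,y] \le \frac{\|x-y\|}{\|x\|^{1-p}} + \|y\|\left(\frac{1}{\|y\|^{1-p}} - \frac{1}{\|x\|^{1-p}}\right)
= \frac{\|x-y\|}{\|x\|^{1-p}} + \|y\|^{p} - \frac{\|y\|}{\|x\|^{1-p}}.
\]
(The sign is correct because $\|x\| \ge \|y\|$ forces $\|x\|^{1-p} \ge \|y\|^{1-p}$ for $p \in [0,1]$.) Thus it suffices to show
\[
\|y\|^{p} - \frac{\|y\|}{\|x\|^{1-p}} \le (1-p)\,\frac{\|x-y\|}{\|x\|^{1-p}},
\]
i.e., after multiplying through by $\|x\|^{1-p}$, that $\|x\|^{1-p}\|y\|^{p} - \|y\| \le (1-p)\|x-y\|$.

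The key analytic step is the elementary inequality $\|x\|^{1-p}\|y\|^{p} - \|y\| \le (1-p)\bigl(\|x\|-\|y\|\bigr)$, valid for $\|x\| \ge \|y\| > 0$ and $p \in [0,1]$; combined with $\|x\| - \|y\| \le \|x-y\|$ this finishes the argument. To prove this scalar inequality set $s = \|y\|/\|x\| \in (0,1]$ and divide by $\|x\|$: it becomes $s^{p} - s \le (1-p)(1-s)$, equivalently $f(s) := s^{p} - s - (1-p)(1-s) \le 0$ on $(0,1]$. One checks $f(1) = 0$ and $f'(s) = p\,s^{p-1} - 1 + (1-p) = p(s^{p-1}-1) \ge 0$ for $s \in (0,1]$ since $s^{p-1} \ge 1$ there; hence $f$ is nondecreasing on $(0,1]$ and therefore $f(s) \le f(1) = 0$.

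The only real obstacle is bookkeeping: one must be careful that the decomposition and the sign choices are consistent with the assumption $\|x\| \ge \|y\|$, since the roles of $x$ and $y$ are not symmetric in the intermediate steps even though $\alpha_p[x,y]$ and the right-hand side are symmetric; the case $\|y\| \ge \|x\|$ then follows by interchanging $x$ and $y$. I would also note explicitly that for $p = 1$ the inequality degenerates to $\alpha_1[x,y] = \|x-y\| \le \|x-y\|$, and for $p=0$ it recovers the Dunkl--Williams-type bound $\alpha[x,y] \le 2\|x-y\|/\max\{\|x\|,\|y\|\}$, which is a convenient sanity check on the constant $2-p$.
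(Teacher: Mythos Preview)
Your argument is correct. The paper does not give its own proof of this statement; it is simply quoted from Maligranda \cite{Malig}, and your decomposition
\[
\|x\|^{p-1}x-\|y\|^{p-1}y=\|x\|^{p-1}(x-y)+\bigl(\|x\|^{p-1}-\|y\|^{p-1}\bigr)y
\]
followed by the scalar estimate $s^{p}-s\le(1-p)(1-s)$ for $s\in(0,1]$ is exactly the route taken in Maligranda's original proof, so there is nothing to contrast.
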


The next theorem is a generalization of the Dunkl--Williams inequality \cite{D-W} and a theorem of Al-Rashed \cite[Theorem 2.2]{A-R}.
\begin{theorem} \label{t2}
Let $(\mathscr{X},\|.\|)$ be a real normed space, $p\in[0,1]$ and $q>0$.\\
Then the following inequality holds
\begin{eqnarray*}
\alpha_{p}[x,y]\leq 2^{1+\frac{1}{q}} \frac {\|x-y\|}{(\|x\|^{(1-p)q}+\|y\|^{(1-p)q})^\frac{1}{q}}
\end{eqnarray*}
for all nonzero elements $x$ and $y$ in $\mathscr{X}$.
\end{theorem}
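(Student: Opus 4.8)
The plan is to obtain this bound as an almost immediate consequence of Maligranda's estimate (Theorem \ref{t1}). Since $p\in[0,1]$ we have $2-p\le 2$, so it suffices to replace the factor $\dfrac{1}{(\max\{\|x\|,\|y\|\})^{1-p}}$ appearing in Theorem \ref{t1} by something controlled by $\dfrac{2^{1/q}}{(\|x\|^{(1-p)q}+\|y\|^{(1-p)q})^{1/q}}$. The elementary inequality that does this job is
\begin{eqnarray*}
\|x\|^{(1-p)q}+\|y\|^{(1-p)q}\le 2\bigl(\max\{\|x\|,\|y\|\}\bigr)^{(1-p)q},
\end{eqnarray*}
which holds because each of the two summands on the left is at most the maximum, raised to the same positive power $(1-p)q$.

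Next I would raise both sides of this inequality to the power $1/q$. Since $q>0$, the map $t\mapsto t^{1/q}$ is increasing on $[0,\infty)$, so the inequality is preserved and gives
\begin{eqnarray*}
\bigl(\|x\|^{(1-p)q}+\|y\|^{(1-p)q}\bigr)^{1/q}\le 2^{1/q}\bigl(\max\{\|x\|,\|y\|\}\bigr)^{1-p}.
\end{eqnarray*}
Taking reciprocals (all quantities are positive since $x,y\neq 0$) yields
\begin{eqnarray*}
\frac{1}{\bigl(\max\{\|x\|,\|y\|\}\bigr)^{1-p}}\le \frac{2^{1/q}}{\bigl(\|x\|^{(1-p)q}+\|y\|^{(1-p)q}\bigr)^{1/q}}.
\end{eqnarray*}

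Finally I would chain these together with Theorem \ref{t1}:
\begin{eqnarray*}
\alpha_{p}[x,y]\le (2-p)\frac{\|x-y\|}{\bigl(\max\{\|x\|,\|y\|\}\bigr)^{1-p}}\le 2\cdot 2^{1/q}\,\frac{\|x-y\|}{\bigl(\|x\|^{(1-p)q}+\|y\|^{(1-p)q}\bigr)^{1/q}},
\end{eqnarray*}
and the right-hand side is exactly $2^{1+\frac1q}\dfrac{\|x-y\|}{(\|x\|^{(1-p)q}+\|y\|^{(1-p)q})^{1/q}}$, as claimed. There is no serious obstacle here; the only point requiring a moment's care is verifying that every power and reciprocal manipulation respects the direction of the inequality, which is guaranteed by $q>0$, $p\le 1$, and the nonvanishing of $x$ and $y$. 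One should perhaps remark that the constant $2^{1+1/q}$ is simply what this crude comparison produces and need not be optimal, which motivates the sharper characterization-type results obtained later in the paper.
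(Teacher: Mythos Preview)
Your proof is correct and follows essentially the same route as the paper's: both start from Maligranda's estimate (Theorem~\ref{t1}), use the elementary bound $\|x\|^{(1-p)q}+\|y\|^{(1-p)q}\le 2(\max\{\|x\|,\|y\|\})^{(1-p)q}$, raise to the $1/q$ power, take reciprocals, and absorb $2-p\le 2$. The only cosmetic difference is that the paper fixes $\|x\|\le\|y\|$ by symmetry and writes $\|y\|$ in place of your $\max\{\|x\|,\|y\|\}$.
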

\begin{proof}
Due to Theorem \ref{t1}, it is sufficient to show that
$$(2-p)\frac{\|x-y\|}{(\max\{\|x\|,\|y\|\})^{1-p}}\leq 2^{1+\frac{1}{q}} \frac {\|x-y\|}{(\|x\|^{(1-p)q}+\|y\|^{(1-p)q})^\frac{1}{q}}$$
Without loss of generality, we assume that $\|x\|\leq\|y\|$.\\ Since $p\leq1$ and $q>0$, we observe that $\|x\|^{(1-p)q}+\|y\|^{(1-p)q}
\leq2\|y\|^{(1-p)q}$.\\
Thus $(\|x\|^{(1-p)q}+\|y\|^{(1-p)q})^{\frac{1}{q}}\leq2^{\frac{1}{q}}\|y\|^{1-p}$ or equivalently
$$\frac{1}{\|y\|^{1-p}}\leq\frac{2^{\frac{1}{q}}}{(\|x\|^{(1-p)q}+\|y\|^{(1-p)q})^{\frac{1}{q}}}\,,$$
whence
$$(2-p)\frac{\|x-y\|}{(\max\{\|x\|,\|y\|\})^{1-p}}\leq\frac{2\|x-y\|}{\|y\|^{1-p}}\leq
2^{1+\frac{1}{q}}\frac{\|x-y\|}{(\|x\|^{(1-p)q}+\|y\|^{(1-p)q})^\frac{1}{q}}.$$
\end{proof}

\begin{proposition} \label{t3}
Let $(\mathscr{X},\|.\|)$ be an inner product space. Then the following inequality holds
$$\alpha_{p}[x,y]\leq 2\frac {\|x-y\|}{\|x\|^{1-p}+\|y\|^{1-p}} \ \ (x,y\neq0,\ \ p\in[0,1])\,.$$
\end{proposition}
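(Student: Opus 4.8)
The plan is to square both sides, use the inner product to turn each norm into a scalar expression, and thereby reduce the claim to an elementary inequality. Write $a=\|x\|$, $b=\|y\|$ and $c=\langle x,y\rangle$, so that the Cauchy--Schwarz inequality gives $c\ge -ab$. Expanding the norm in the definition of the $p$-angular distance,
$$\alpha_p[x,y]^2=\Big\|\frac{x}{a^{1-p}}-\frac{y}{b^{1-p}}\Big\|^2=a^{2p}+b^{2p}-\frac{2c}{(ab)^{1-p}},$$
while $\|x-y\|^2=a^2+b^2-2c$. Since both sides of the asserted inequality are nonnegative, it is equivalent to
$$\Big(a^{2p}+b^{2p}-\frac{2c}{(ab)^{1-p}}\Big)\big(a^{1-p}+b^{1-p}\big)^2\le 4\big(a^2+b^2-2c\big).$$

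Next I would regard the difference $D(c)$ of the right-hand side minus the left-hand side as an affine function of the single variable $c$. A direct computation shows that its slope is
$$-8+\frac{2(a^{1-p}+b^{1-p})^2}{(ab)^{1-p}}=\frac{2\,(a^{1-p}-b^{1-p})^2}{(ab)^{1-p}}\ge 0,$$
so $D$ is nondecreasing in $c$; since $c\ge -ab$, it therefore suffices to check $D(-ab)\ge 0$. At $c=-ab$ the factor multiplying $\big(a^{1-p}+b^{1-p}\big)^2$ becomes $a^{2p}+b^{2p}+2(ab)^p=(a^p+b^p)^2$ and $a^2+b^2-2c=(a+b)^2$, so the inequality to be proved reduces to
$$\big(a^p+b^p\big)\big(a^{1-p}+b^{1-p}\big)\le 2(a+b).$$

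Finally I would expand the left-hand side as $a+b+a^pb^{1-p}+a^{1-p}b^p$, so that the last display is equivalent to $a^pb^{1-p}+a^{1-p}b^p\le a+b$, i.e.\ to $(a^p-b^p)(a^{1-p}-b^{1-p})\ge 0$. This holds because for $p\in[0,1]$ both maps $t\mapsto t^p$ and $t\mapsto t^{1-p}$ are nondecreasing on $(0,\infty)$, so the two factors share the same sign. The one step carrying any real content is the observation that, after squaring, the inequality is affine in $\langle x,y\rangle$ and hence worst at the Cauchy--Schwarz extreme $\langle x,y\rangle=-\|x\|\|y\|$; everything else is a routine expansion, and no degenerate cases arise since $x$ and $y$ are nonzero.
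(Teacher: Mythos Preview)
Your proof is correct and, in fact, cleaner than the paper's own argument. The two proofs begin the same way---expand $\alpha_p[x,y]^2$ via the inner product and reduce the claim to a scalar inequality in $a=\|x\|$, $b=\|y\|$ and $c=\mathrm{Re}\langle x,y\rangle$---but then diverge sharply. The paper fixes $x,y$ and studies the left-hand side of the rearranged inequality as a function $f(p)$ on $[0,1]$; it argues that $f'(p)=0$ has a unique root in $(0,1)$ (this step is verified with the computer algebra system MAPLE rather than by hand), infers from the signs of $f'(0)$ and $f'(1)$ that this root is a minimum, and concludes by evaluating $f$ at the endpoints $p=0$ and $p=1$. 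Your approach instead fixes $a,b,p$ and observes that the squared inequality is \emph{affine} in $c$, with slope $2(a^{1-p}-b^{1-p})^2/(ab)^{1-p}\ge 0$, so the worst case is the Cauchy--Schwarz extreme $c=-ab$; there the inequality collapses to $(a^p+b^p)(a^{1-p}+b^{1-p})\le 2(a+b)$, equivalently $(a^p-b^p)(a^{1-p}-b^{1-p})\ge 0$, which is immediate from monotonicity. This is genuinely more elementary: it avoids any analysis of transcendental equations in $p$, needs no computer verification, and handles the case $\|x\|=\|y\|$ uniformly. One cosmetic point: for complex spaces you should write $c=\mathrm{Re}\langle x,y\rangle$ rather than $\langle x,y\rangle$, but the argument is unchanged since $|\mathrm{Re}\langle x,y\rangle|\le ab$ still holds.
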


\begin{proof}
Let $\langle\cdot,\cdot\rangle$ be the inner product on $\mathscr{X}$. Then

\begin{eqnarray}\label{L1}
\alpha_{p}^{2}[x,y]&=&\langle\frac{x}{\|x\|^{1-p}}-\frac{y}{\|y\|^{1-p}}\ ,\ \frac{x}{\|x\|^{1-p}}-\frac{y}{\|y\|^{1-p}}\rangle\nonumber\\
                   &=&\|x\|^{2p}-\frac{2{\rm Re}\langle x,y\rangle}{\|x\|^{1-p}\|y\|^{1-p}}+\|y\|^{2p}\nonumber\\
&=&\|x\|^{2p}-\frac{\|x\|^{2}+\|y\|^{2}-\|x-y\|^{2}}{\|x\|^{1-p}\|y\|^{1-p}}+\|y\|^{2p}\,.
\end{eqnarray}

Due to equality \eqref{L1} it is enough to show that
$$\|x\|^{2p}-\frac{\|x\|^{2}+\|y\|^{2}-\|x-y\|^{2}}{\|x\|^{1-p}\|y\|^{1-p}}+\|y\|^{2p}\leq4\frac{\|x-y\|^{2}}{(\|x\|^{1-p}+\|y\|^{1-p})^{2}}$$
or that the last inequality of the following sequence of equivalent inequalities holds.
$$\|x\|^{2p}-\frac{\|x\|^{2}+\|y\|^{2}}{\|x\|^{1-p}\|y\|^{1-p}}+\|y\|^{2p}
\leq(\frac{4}{(\|x\|^{1-p}+\|y\|^{1-p})^{2}}-\frac{1}{\|x\|^{1-p}\|y\|^{1-p}})\|x-y\|^{2}$$
$$\frac{\|x\|^{p+1}\|y\|^{1-p}-(\|x\|^{2}+\|y\|^{2})+\|x\|^{1-p}\|y\|^{p+1}}{\|x\|^{1-p}\|y\|^{1-p}}
\leq\frac{-(\|x\|^{1-p}-\|y\|^{1-p})^{2}\|x-y\|^{2}}{(\|x\|^{1-p}+\|y\|^{1-p})^{2}\|x\|^{1-p}\|y\|^{1-p}}$$
$$\frac{(\|x\|^{1-p}-\|y\|^{1-p})^{2}}{(\|x\|^{1-p}+\|y\|^{1-p})^{2}}\|x-y\|^{2}
\leq(\|x\|^{2}+\|y\|^{2})-(\|x\|^{p+1}\|y\|^{1-p}+\|x\|^{1-p}\|y\|^{p+1})$$
\begin{eqnarray}\label{L2}
\frac{(\|x\|^{1-p}-\|y\|^{1-p})^{2}}{(\|x\|^{1-p}+\|y\|^{1-p})^{2}}+\frac{\|x\|^{p+1}\|y\|^{1-p}+\|x\|^{1-p}\|y\|^{p+1}}{\|x-y\|^{2}}
\leq\frac{\|x\|^{2}+\|y\|^{2}}{\|x-y\|^{2}}
\end{eqnarray}

To prove \eqref{L2}, let $x,y\in \mathscr{X}-\{0\}$. Without loss of generality we suppose that $\|x\|<\|y\|$.
We define the differentiable real valued function $f$ as follows:
$$f(p)=\frac{(\|x\|^{1-p}-\|y\|^{1-p})^{2}}{(\|x\|^{1-p}+\|y\|^{1-p})^{2}}+
\frac{\|x\|^{p+1}\|y\|^{1-p}+\|x\|^{1-p}\|y\|^{p+1}}{\|x-y\|^{2}}\ \ (p\in[0,1]).$$
We claim that $f$ has exactly one local extremum point at the interval $(0,1)$.\\
By a straightforward calculation we see that
$$f^{\prime}(p)=0\Leftrightarrow4\|x-y\|^{2}(\|x\|^{1-p}-\|y\|^{1-p})+(\|y\|^{2p}-\|x\|^{2p})(\|x\|^{1-p}+\|y\|^{1-p})^{3}=0$$
$$\Leftrightarrow4b(1-a^{1-p})+(a^{2p}-1)(1+a^{1-p})^{3}=0 ,$$
where $a=\frac{\|y\|}{\|x\|}$ and $b=\frac{\|x-y\|^{2}}{\|x\|^{2}}$. Clearly $a>1$ and $(a-1)^{2}\leq b\leq(a+1)^{2}$.\\
Using the software MAPLE 11 we observe that the exponential equation
\begin{eqnarray*}
4b(1-a^{1-p})+(a^{2p}-1)(1+a^{1-p})^{3}=0
\end{eqnarray*}
has exactly one solution $p_{0}$ in the interval $(0,1)$. In fact the function $f$ takes the local minimum
at the point of $p_{0}$ due to the facts that $f^{\prime}(0)<0$ and $f^{\prime}(1)>0$.
Hence the function $f$ takes the absolute maximum at the boundary points of $[0,1]$.\\
Therefore
$$f(p)\leq\max\{f(0),f(1)\}\ \ \ (p\in[0,1]).$$
Thus $$f(p)\leq\max\left\{\frac{(\|x\|-\|y\|)^{2}}{(\|x\|+\|y\|)^{2}}+\frac{2\|x\|\|y\|}{\|x-y\|^{2}}\ ,
\frac{\|x\|^{2}+\|y\|^{2}}{\|x-y\|^{2}}\right\}\ \ \ (p\in[0,1]),$$
whence $$f(p)\leq\frac{\|x\|^{2}+\|y\|^{2}}{\|x-y\|^{2}}\ \ \ (p\in[0,1]).$$
\end{proof}

The next theorem is due to Lorch \cite{Lorch}, in which the dimension of the underlying space $\mathscr{X}$ plays no role. This is significant
since, for instance, the symmetry of Birkhoff--James orthogonality which is a characterization of inner product spaces is valid
when $\dim \mathscr{X}\geq3$, see \cite{Day, James1}. We recall that the behavior of a space in dimension $1$ or $2$ differs from that in dimension $3$, see \cite{Amir, RAT}.

\begin{theorem} \label{t4} \cite{Lorch}
 Let $(\mathscr{X},\|.\|)$ be a real normed space. Then the following statements are mutually equivalent:\\
{\rm (i)} For each $x,y\in\mathscr{X}$ if $\|x\|=\|y\|$, then $\|x+y\| \leq \|\gamma x+\gamma^{-1}y\|$ (for all $\gamma \neq0$).\\
{\rm (ii)} For each $x,y\in\mathscr{X}$ if $\|x+y\| \leq \|\gamma x+\gamma^{-1}y\|$ (for all $\gamma \neq0$), then $\|x\|=\|y\|$.\\
{\rm (iii)} $(\mathscr{X},\|.\|)$ is an inner product space.
\end{theorem}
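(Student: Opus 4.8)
The plan is to establish the cyclic chain of implications (iii)$\Rightarrow$(i)$\Rightarrow$(ii)$\Rightarrow$(iii). The first two implications should be comparatively soft, while the return to (iii) will be the substantial part of the argument and will rely on one of the classical parallelogram-type characterizations of inner product spaces (for instance the Jordan--von Neumann criterion, or equivalently the result of Ficken mentioned in the Introduction).

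For (iii)$\Rightarrow$(i): assume $\langle\cdot,\cdot\rangle$ induces the norm and $\|x\|=\|y\|$. Then for every $\gamma\neq 0$ I would expand
$$\|\gamma x+\gamma^{-1}y\|^{2}=\gamma^{2}\|x\|^{2}+2\langle x,y\rangle+\gamma^{-2}\|y\|^{2}=(\gamma^{2}+\gamma^{-2})\|x\|^{2}+2\langle x,y\rangle,$$
and since $\gamma^{2}+\gamma^{-2}\geq 2$ with equality at $\gamma=\pm1$, this is at least $2\|x\|^{2}+2\langle x,y\rangle=\|x+y\|^{2}$, giving (i).

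For (i)$\Rightarrow$(ii): suppose (i) holds and take $x,y$ with $\|x+y\|\leq\|\gamma x+\gamma^{-1}y\|$ for all $\gamma\neq0$; I want $\|x\|=\|y\|$. The natural idea is to argue by contradiction: if $\|x\|\neq\|y\|$, I would rescale by introducing $u=\lambda x$, $v=\lambda^{-1}y$ with $\lambda$ chosen so that $\|u\|=\|v\|$ (namely $\lambda=(\|y\|/\|x\|)^{1/2}$); then (i) applies to $u,v$ and yields $\|u+v\|\leq\|\gamma u+\gamma^{-1}v\|$ for all $\gamma$, i.e. the same inequality for the pair $(x,y)$ but now \emph{centered} at the value $\gamma=\lambda\neq1$ rather than at $\gamma=1$. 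Writing $g(\gamma):=\|\gamma x+\gamma^{-1}y\|$, the hypothesis says $g$ attains a minimum at $\gamma=1$ while the consequence of (i) says $g$ attains a minimum at $\gamma=\lambda\neq1$; analyzing the behavior of this function (it is convex in $t$ after the substitution $\gamma=e^{t}$, being a sup of the affine-in-parameter quantities only after care, but in any case its minima form an interval) should force $\langle$ the two minimum locations to coincide, hence $\lambda=1$, i.e. $\|x\|=\|y\|$. This is where I expect the first real care to be needed: one must legitimately rule out a flat stretch of minimizers, and the cleanest route is probably to use (i) a second time with a different equal-norm pair to pin down strict behavior, or simply to observe that condition (i) applied to the normalized pair already gives the reverse comparison needed to sandwich $g(1)$ between $g(\lambda)$ and itself.

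For (ii)$\Rightarrow$(iii): this is the crux. Given (ii), I plan to verify the parallelogram law. Fix $x,y\in\mathscr{X}$; I want $\|x+y\|^{2}+\|x-y\|^{2}=2\|x\|^{2}+2\|y\|^{2}$. The strategy is to feed suitable pairs into (ii). For arbitrary $u$ with $\|u\|$ not obviously equal to anything, consider the pair $x:=\tfrac{u+v}{2}$, $y:=\tfrac{u-v}{2}$ (so $u=x+y$, $v=x-y$), and try to show that whenever $\|u+v\|=\|u-v\|$ — equivalently $\|x\|=\|y\|$ in the rotated coordinates — the hypothesis of (ii) is met in reverse, forcing via (i)-type reasoning (already available from (ii)$\Rightarrow$(i)? no — only (ii) is assumed, but (ii)$\Rightarrow$(i) is \emph{not} among the claimed implications directly) the equal-norm identity. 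The honest approach here is: assuming only (ii), show that for every pair $a,b$ with $\|a\|=\|b\|$ one has $\|a+b\|\leq\|\gamma a+\gamma^{-1}b\|$ for all $\gamma$ — i.e. deduce (i) from (ii) — which together with the already-proved (iii)$\Rightarrow$(i) would not close the loop; so instead one must directly extract the parallelogram law. Concretely, I would use the transversal characterization: (ii) says that the \emph{only} way $g(\gamma)=\|\gamma x+\gamma^{-1}y\|$ can have its minimum at $\gamma=1$ is if $\|x\|=\|y\|$; computing $g(\gamma)^{2}$ at $\gamma$ near $1$ and using that $\gamma=1$ is a minimizer yields, in the limit, a one-sided derivative condition that is exactly a norm-smoothness/orthogonality statement of the form ``$\|x\|=\|y\|\Rightarrow x+y \perp x-y$ in the James sense'', and conversely its failure; iterating this over all two-dimensional subspaces reconstructs the inner product via the polarization formula. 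The main obstacle, and the step I would budget the most effort for, is precisely this last extraction: turning the purely metric statement (ii) into the parallelogram law without circularity, which I expect to require either a clever explicit choice of $\gamma$ as a function of $\|x\|,\|y\|$ that makes $g(\gamma)$ computable, or an appeal to one of the cited characterizations (e.g. via isosceles orthogonality being additive, as in \cite{Amir}) once the orthogonality consequence has been isolated.
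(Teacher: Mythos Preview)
The paper does not prove Theorem~\ref{t4}; it is quoted from Lorch \cite{Lorch} and used as a black box in the proof of Theorem~\ref{t5}. So there is no in-paper argument to compare against, and your proposal is attempting something the authors deliberately cite rather than reprove.

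Regarding the proposal itself: your (iii)$\Rightarrow$(i) is correct and clean. The other two implications remain genuine gaps. In (i)$\Rightarrow$(ii) you correctly reduce to the observation that $g(\gamma)=\|\gamma x+\gamma^{-1}y\|$ would attain its minimum both at $\gamma=1$ and at $\gamma=\lambda=(\|y\|/\|x\|)^{1/2}$, but you never rule out a plateau of minimizers; your remark that ``the minima form an interval'' after the substitution $\gamma=e^{t}$ presumes convexity of $t\mapsto g(e^{t})$, which is not automatic in an arbitrary normed space, and the suggested fixes (``use (i) a second time'', ``sandwich $g(1)$'') are not carried out. In (ii)$\Rightarrow$(iii) you list several possible strategies (parallelogram law, James orthogonality, additivity of isosceles orthogonality) without committing to or executing any of them; this step is exactly the substantial content of Lorch's original paper, and your sketch does not get past the point of naming the difficulty. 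If you want a self-contained argument you should consult \cite{Lorch} directly or the treatment in \cite{Amir}.
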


The next result is an extension of the results of Al-Rashed \cite{A-R}. It provides a reverse of Proposition \ref{t3}.

\begin{theorem} \label{t5}
Let $(\mathscr{X},\|.\|)$ be a real normed space and $p\in[0,1)$.
If there exists a positive number $q$ such that
\begin{eqnarray}\label{L3}
\alpha_{p}[x,y]\leq 2^{\frac{1}{q}} \frac {\|x-y\|}{(\|x\|^{(1-p)q}+\|y\|^{(1-p)q})^\frac{1}{q}} \ \ (x,y\neq0)
\end{eqnarray}
Then $(\mathscr{X},\|.\|)$ is an inner product space.
\end{theorem}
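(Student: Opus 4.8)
The plan is to verify condition~(i) of Lorch's Theorem~\ref{t4}, i.e. to show that whenever $x,y\in\mathscr{X}$ satisfy $\|x\|=\|y\|$, one has $\|x+y\|\le\|\gamma x+\gamma^{-1}y\|$ for every $\gamma\neq0$; the conclusion then follows at once from Theorem~\ref{t4}. Since replacing $\gamma$ by $-\gamma$ leaves $\|\gamma x+\gamma^{-1}y\|$ unchanged, it is enough to treat $\gamma>0$.

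The key step is to feed the hypothesis \eqref{L3} the particular pair of nonzero vectors $u=\gamma x$ and $v=-\gamma^{-1}y$. Writing $r=\|x\|=\|y\|$, a direct computation gives $u/\|u\|^{1-p}=\gamma^{p}r^{-(1-p)}x$ and $v/\|v\|^{1-p}=-\gamma^{-p}r^{-(1-p)}y$, hence $\alpha_{p}[u,v]=r^{-(1-p)}\|\gamma^{p}x+\gamma^{-p}y\|$, while $\|u-v\|=\|\gamma x+\gamma^{-1}y\|$ and $\|u\|^{(1-p)q}+\|v\|^{(1-p)q}=r^{(1-p)q}\bigl(\gamma^{(1-p)q}+\gamma^{-(1-p)q}\bigr)$. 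The common power of $r$ cancels from the two sides of \eqref{L3}, which therefore reduces to
$$\|\gamma^{p}x+\gamma^{-p}y\|\ \le\ \frac{2^{1/q}}{\bigl(\gamma^{(1-p)q}+\gamma^{-(1-p)q}\bigr)^{1/q}}\,\|\gamma x+\gamma^{-1}y\|\qquad(\gamma>0).$$
Because $t+t^{-1}\ge 2$ for all $t>0$ and $1/q>0$, the scalar factor on the right is $\le 1$, so this becomes the clean inequality $\|\gamma^{p}x+\gamma^{-p}y\|\le\|\gamma x+\gamma^{-1}y\|$ for every $\gamma>0$.

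Finally I would package this in the continuous auxiliary function $g(t)=\|tx+t^{-1}y\|$ $(t>0)$: the inequality just obtained reads $g(\gamma^{p})\le g(\gamma)$. If $p=0$ this is already $\|x+y\|=g(1)\le g(\gamma)$. If $p\in(0,1)$, iterating gives $g(\gamma^{p^{n}})\le g(\gamma)$ for all $n\ge1$, and letting $n\to\infty$, where $\gamma^{p^{n}}\to1$, continuity of $g$ yields $\|x+y\|=g(1)\le g(\gamma)$. Thus Lorch's condition~(i) holds and $(\mathscr{X},\|\cdot\|)$ is an inner product space by Theorem~\ref{t4}.

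The only genuinely delicate point is that the natural substitution does not deliver $\|x+y\|$ on the left of \eqref{L3} but the ``twisted'' quantity $\|\gamma^{p}x+\gamma^{-p}y\|$; passing from there to Lorch's inequality requires the iteration-and-limit device $\gamma\mapsto\gamma^{p}\mapsto\gamma^{p^{2}}\mapsto\cdots\to1$, which converges precisely because $p<1$. Everything else — the explicit evaluation of $\alpha_{p}[u,v]$, the bound $t+t^{-1}\ge2$, and the reduction to $\gamma>0$ — is routine bookkeeping that I would carry out without difficulty.
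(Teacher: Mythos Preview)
Your proof is correct and follows essentially the same route as the paper: both verify Lorch's criterion by substituting $(\gamma^{p^{n}}x,\,-\gamma^{-p^{n}}y)$ into \eqref{L3}, using $t+t^{-1}\ge2$ to strip off the scalar factor, and then iterating $\gamma\mapsto\gamma^{p}$ and letting $\gamma^{p^{n}}\to1$. The only cosmetic differences are that you treat $p=0$ within the same argument (since $\gamma^{0}=1$ already gives Lorch's inequality) whereas the paper cites Al-Rashed for that case, and you phrase the limit step via continuity of $g(t)=\|tx+t^{-1}y\|$ rather than via monotonicity of the sequence.
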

\begin{proof}In the case when $p=0$ the theorem holds by a result due to Al-Rashed \cite[Theorem 2.4]{A-R}. So let us assume that
$0<p<1$.\\
Let $x,y\in\mathscr{X}$, $\|x\|=\|y\|$ and $\gamma \neq0$. From Theorem \ref{t4} it is enough to prove that
$\|x+y\| \leq \|\gamma x+\gamma^{-1}y\|$. Also we may assume that
$x\neq0$ and $y\neq0$.\\ Applying inequality \eqref{L3} to
$\gamma^{p^{n}}x$ and $-\gamma^{-p^{n}}y$ instead of $x$ and $y$,
respectively, we obtain
$$\alpha_{p}[\gamma^{p^{n}}x,-\gamma^{-p^{n}}y]\leq 2^{\frac{1}{q}} \frac {\|\gamma^{p^{n}}x+\gamma^{-p^{n}}y\|}{(\|\gamma^{p^{n}}x\|^{(1-p)q}+\|\gamma^{-p^{n}}y\|^{(1-p)q})^\frac{1}{q}}\,\,\,\,\,(n\in \mathbb{N}\cup\{0\})\,.$$
For $\gamma>0$ it follows from the definition of $\alpha_{p}$ that
$$\left\| \frac{\gamma^{p^{n}}x}{\gamma^{p^{n}(1-p)}\|x\|^{1-p}} + \frac{\gamma^{-p^{n}}y}{\gamma^{-p^{n}(1-p)}\|y\|^{1-p}}  \right\| \leq 2^{\frac{1}{q}} \frac {\|\gamma^{p^{n}}x+\gamma^{-p^{n}}y\|}{\|x\|^{1-p}(\gamma^{p^{n}(1-p)q}+\gamma^{-p^{n}(1-p)q})^\frac{1}{q}}$$
or equivalently
$$(\frac{\gamma^{p^{n}(1-p)q}+\gamma^{-p^{n}(1-p)q}}{2})^\frac{1}{q}\|\gamma^{p^{n+1}}x+\gamma^{-p^{n+1}}y\| \leq \|\gamma^{p^{n}}x+\gamma^{-p^{n}}y\|$$
for all $n\in \mathbb{N}\cup\{0\}$, whence $0\leq\|\gamma^{p^{n+1}}x+\gamma^{-p^{n+1}}y\| \leq
\|\gamma^{p^{n}}x+\gamma^{-p^{n}}y\|\,\,\,\,\,(n\in
\mathbb{N}\cup\{0\})$, since
$\gamma^{p^{n}(1-p)q}+\gamma^{-p^{n}(1-p)q}\geq2$. Hence $\{
\|\gamma^{p^{n}}x+\gamma^{-p^{n}}y\|\}_{n=0}^{\infty}$ is a
convergent sequence of nonnegative real numbers.
Thus we get $$\|x+y\|=\lim_{n\rightarrow\infty}\|\gamma^{p^{n}}x+\gamma^{-p^{n}}y\| \leq \|\gamma x+\gamma^{-1} y\|$$ due to $0<p<1$.\\
Now let $\gamma$ be negative. Put $\mu=-\gamma>0$. From the positive case we get $$\|x+y\|\leq\|\mu x+\mu^{-1}y\|=\|\gamma x+\gamma^{-1} y\|.$$
\end{proof}

\begin{lemma}\label{lemma}
Let $(\mathscr{X},\|.\|)$ be a normed space and $p\in[0,1]$.
If $0<q_{1}\leq q_{2}$, then $$2^{\frac{1}{q_{2}}} \frac {\|x-y\|}{(\|x\|^{(1-p)q_{2}}+\|y\|^{(1-p)q_{2}})^\frac{1}{q_{2}}}\leq2^{\frac{1}{q_{1}}}
 \frac {\|x-y\|}{(\|x\|^{(1-p)q_{1}}+\|y\|^{(1-p)q_{1}})^\frac{1}{q_{1}}}\ \ (x,y\neq0)$$
\end{lemma}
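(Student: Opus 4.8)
The plan is to cancel the common factor $\|x-y\|$ from both sides and then recognize what remains as the classical monotonicity of the power means. If $x=y$ both sides vanish and there is nothing to prove, so assume $x\neq y$; then $\|x-y\|>0$, and since the two power means appear in the denominators, the asserted inequality is equivalent (after dividing by $\|x-y\|$ and taking reciprocals, which reverses the inequality) to
$$\left(\frac{\|x\|^{(1-p)q_{1}}+\|y\|^{(1-p)q_{1}}}{2}\right)^{\frac{1}{q_{1}}}\leq\left(\frac{\|x\|^{(1-p)q_{2}}+\|y\|^{(1-p)q_{2}}}{2}\right)^{\frac{1}{q_{2}}}.$$
Writing $a=\|x\|^{1-p}>0$ and $b=\|y\|^{1-p}>0$ (well defined and positive because $1-p\geq0$ and $x,y\neq0$), this is precisely $M_{q_{1}}(a,b)\leq M_{q_{2}}(a,b)$, where $M_{q}(a,b):=\big((a^{q}+b^{q})/2\big)^{1/q}$.

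Next I would establish this power-mean inequality directly. Put $r=q_{2}/q_{1}\geq1$. The function $\varphi(t)=t^{r}$ is convex on $[0,\infty)$, so by (midpoint) convexity applied to $u=a^{q_{1}}$ and $v=b^{q_{1}}$,
$$\left(\frac{a^{q_{1}}+b^{q_{1}}}{2}\right)^{r}=\varphi\!\left(\frac{a^{q_{1}}+b^{q_{1}}}{2}\right)\leq\frac{\varphi(a^{q_{1}})+\varphi(b^{q_{1}})}{2}=\frac{a^{q_{2}}+b^{q_{2}}}{2}.$$
Raising both sides to the power $1/q_{2}>0$, which is order preserving, yields $M_{q_{1}}(a,b)\leq M_{q_{2}}(a,b)$, as required. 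Tracing the equivalences back — divide the displayed inequality between the power means by nothing, rewrite $2^{1/q_{i}}/(\cdots)^{1/q_{i}}=M_{q_{i}}(a,b)^{-1}$, invert, and multiply through by $\|x-y\|>0$ — gives the statement of the lemma.

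I do not anticipate any genuine obstacle here: the content is just the monotonicity of power means together with a bookkeeping reduction. The only points deserving care are (i) the sign reversal caused by the power means sitting in the denominators, so that $q_{1}\leq q_{2}$ corresponds to the larger mean producing the smaller fraction, and (ii) the trivial case $x=y$. It is also worth remarking that the hypothesis $p\in[0,1]$ enters only through $1-p\geq0$, ensuring that $\|x\|^{1-p}$ and $\|y\|^{1-p}$ are meaningful nonnegative quantities; the underlying numerical inequality $M_{q_{1}}\leq M_{q_{2}}$ is valid for all nonnegative reals.
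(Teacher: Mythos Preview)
Your proof is correct and follows essentially the same route as the paper's: both reduce the lemma (after disposing of the trivial case $x=y$ and cancelling $\|x-y\|$) to the scalar inequality $\big((a^{q_{1}}+b^{q_{1}})/2\big)^{1/q_{1}}\leq\big((a^{q_{2}}+b^{q_{2}})/2\big)^{1/q_{2}}$ with $a=\|x\|^{1-p}$, $b=\|y\|^{1-p}$. The only cosmetic difference is that the paper quotes the equivalent form $a^{t}+b^{t}\leq 2^{1-t}(a+b)^{t}$ for $0<t\leq 1$ (applied with $t=q_{1}/q_{2}$) as a known inequality, whereas you supply a one-line convexity proof of the power-mean monotonicity; the content is identical.
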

\begin{proof}
Without loss of generality, assume that $x\neq y$ . We have the following equivalent statements
$$2^{\frac{1}{q_{2}}}\frac {\|x-y\|}{(\|x\|^{(1-p)q_{2}}+\|y\|^{(1-p)q_{2}})^\frac{1}{q_{2}}}\leq2^{\frac{1}{q_{1}}}
\frac {\|x-y\|}{(\|x\|^{(1-p)q_{1}}+\|y\|^{(1-p)q_{1}})^\frac{1}{q_{1}}}$$
\begin{eqnarray*}
&\Leftrightarrow&(\|x\|^{(1-p)q_{1}}+\|y\|^{(1-p)q_{1}})^{\frac{1}{q_{1}}}\leq2^{\frac{1}{q_{1}}-\frac{1}{q_{2}}}(\|x\|^{(1-p)q_{2}}
+\|y\|^{(1-p)q_{2}})^\frac{1}{q_{2}}\\
&\Leftrightarrow&\|x\|^{(1-p)q_{1}}+\|y\|^{(1-p)q_{1}}\leq2^{1-\frac{q_{1}}{q_{2}}}(\|x\|^{(1-p)q_{2}}+\|y\|^{(1-p)q_{2}})^{\frac{q_{1}}{q_{2}}}\\
&\Leftrightarrow&(\|x\|^{(1-p)q_{2}})^{\frac{q_{1}}{q_{2}}}+(\|y\|^{(1-p)q_{2}})^{\frac{q_{1}}{q_{2}}}\leq2^{1-\frac{q_{1}}{q_{2}}}(\|x\|^{(1-p)q_{2}}
+\|y\|^{(1-p)q_{2}})^{\frac{q_{1}}{q_{2}}}\\
\end{eqnarray*}
The last inequality is an application of the following known inequality
$$a^{t}+b^{t}\leq2^{1-t}(a+b)^{t},\ \ (a,b\geq0,\ 0<t\leq1)$$
to $a=\|x\|^{(1-p)q_{2}},\ b=\|y\|^{(1-p)q_{2}}$ and $t=\frac{q_{1}}{q_{2}}$.
\end{proof}

Finally we are ready to state the characterization of inner product spaces. It is a generalization of a known theorem Kirk--Smiley \cite{K-S}.

\begin{theorem}\label{t6}
Let $(\mathscr{X},\|.\|)$ be a real normed space, and $p\in[0,1)$.
Then the following statements are mutually equivalent:\\
{\rm (i)} $\alpha_{p}[x,y]\leq 2^{\frac{1}{q}} \frac {\|x-y\|}{(\|x\|^{(1-p)q}+\|y\|^{(1-p)q})^\frac{1}{q}}\ \ (x,y\neq0)$, for all $q\in(0,1]$.\\
{\rm (ii)} $\alpha_{p}[x,y]\leq 2^{\frac{1}{q}} \frac {\|x-y\|}{(\|x\|^{(1-p)q}+\|y\|^{(1-p)q})^\frac{1}{q}}\ \ (x,y\neq0)$, for some $q>0$.\\
{\rm (iii)} $(\mathscr{X},\|.\|)$ is an inner product space.
\end{theorem}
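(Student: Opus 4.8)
The plan is to prove the cyclic chain of implications (i) $\Rightarrow$ (ii) $\Rightarrow$ (iii) $\Rightarrow$ (i), so that the three statements collapse into a single equivalence. Two of the three arrows come almost for free from results already established, and the remaining one is a short interpolation argument combining Proposition \ref{t3} with Lemma \ref{lemma}.

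First, (i) $\Rightarrow$ (ii) is immediate: an inequality that holds for \emph{every} $q\in(0,1]$ in particular holds for \emph{some} $q>0$. Next, for (ii) $\Rightarrow$ (iii) I would simply observe that the hypothesis in (ii) is exactly inequality \eqref{L3} of Theorem \ref{t5}, and since $p\in[0,1)$ all hypotheses of that theorem are met; it then delivers that $(\mathscr{X},\|\cdot\|)$ is an inner product space.

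The substantive step is (iii) $\Rightarrow$ (i). Assume $(\mathscr{X},\|\cdot\|)$ is an inner product space and fix nonzero $x,y$. Proposition \ref{t3}, which is valid for all $p\in[0,1]$, yields
$$\alpha_{p}[x,y]\le 2\,\frac{\|x-y\|}{\|x\|^{1-p}+\|y\|^{1-p}}=2^{\frac{1}{1}}\,\frac{\|x-y\|}{\big(\|x\|^{(1-p)\cdot 1}+\|y\|^{(1-p)\cdot 1}\big)^{\frac{1}{1}}},$$
which is precisely the inequality in (i) for the single value $q=1$. Now let $q\in(0,1]$ be arbitrary and apply Lemma \ref{lemma} with $q_{1}=q$ and $q_{2}=1$, which is legitimate since $0<q\le 1$; it gives
$$2^{\frac{1}{1}}\,\frac{\|x-y\|}{\big(\|x\|^{(1-p)\cdot 1}+\|y\|^{(1-p)\cdot 1}\big)^{\frac{1}{1}}}\le 2^{\frac{1}{q}}\,\frac{\|x-y\|}{\big(\|x\|^{(1-p)q}+\|y\|^{(1-p)q}\big)^{\frac{1}{q}}}.$$
Chaining the two displays produces $\alpha_{p}[x,y]\le 2^{1/q}\|x-y\|/(\|x\|^{(1-p)q}+\|y\|^{(1-p)q})^{1/q}$ for every $q\in(0,1]$, which is exactly statement (i), and the cycle is closed.

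I do not expect any real obstacle at this stage: the genuine labor has already been done in Proposition \ref{t3} (the one-variable extremum analysis of $f$ on $[0,1]$) and in Theorem \ref{t5} (the iteration $x\mapsto\gamma^{p^{n}}x$ feeding into Lorch's criterion, Theorem \ref{t4}). The only points requiring a moment's care in assembling Theorem \ref{t6} are that $q=1$ lies in the admissible range $(0,1]$, so it can serve at once as the value at which Proposition \ref{t3} is read off and as the upper endpoint $q_{2}$ in Lemma \ref{lemma}, and that the hypothesis $p\in[0,1)$ — rather than $p\in[0,1]$ — is precisely what Theorem \ref{t5} needs for (ii) $\Rightarrow$ (iii), the endpoint $p=1$ being excluded because there the $p$-angular distance degenerates and the iteration argument breaks down.
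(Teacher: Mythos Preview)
Your proof is correct and follows essentially the same route as the paper: the trivial implication (i) $\Rightarrow$ (ii), the appeal to Theorem \ref{t5} for (ii) $\Rightarrow$ (iii), and for (iii) $\Rightarrow$ (i) the combination of Proposition \ref{t3} (giving the $q=1$ case) with Lemma \ref{lemma} applied with $q_{1}=q$, $q_{2}=1$. Your additional remarks on why $p=1$ is excluded are accurate but go slightly beyond what the paper records.
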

\begin{proof}
${\rm (i)}\Rightarrow {\rm (ii)}$ is trivial.\\
${\rm (ii)} \Rightarrow {\rm (iii)}$ is the same as Theorem \ref{t5}.\\
To complete the proof, we need to establish the implication ${\rm (iii)}\Rightarrow {\rm (i)}$. To see this, let $q\in(0,1]$ be arbitrary. It follows from Proposition \ref{t3} that
\begin{eqnarray}\label{L6}
\alpha_{p}[x,y]\leq 2\frac {\|x-y\|}{\|x\|^{1-p}+\|y\|^{1-p}}\ \ (x,y\neq0)
\end{eqnarray}
By setting $q_{1}=q$ and $q_{2}=1$ in the Lemma \ref{lemma} we get
\begin{eqnarray}\label{L7}
2\frac {\|x-y\|}{\|x\|^{1-p}+\|y\|^{1-p}}\leq2^{\frac{1}{q}} \frac {\|x-y\|}{(\|x\|^{(1-p)q}+\|y\|^{(1-p)q})^\frac{1}{q}}
\end{eqnarray}
Now the result follows from inequalities \eqref{L6} and \eqref{L7}.
\end{proof}
%----------------------------------------------------------------------------------------------------------------------------%

\bibliographystyle{amsplain}

\end{document}